\documentclass[12pt,oneside,reqno]{amsart}
\newtheorem{theorem}{Theorem}[section]
\newtheorem{lemma}[theorem]{Lemma}

\newtheorem{proposition}[theorem]{Proposition}
\newtheorem{definition}[theorem]{Definition}

\title[Analysis of First-Order Linear Pursuit and Evasion Differential Games with Grönwall-Type Constraints]
   {Analysis of First-Order Linear Pursuit and Evasion Differential Games with Grönwall-Type Constraints}
  \date{}

 \author[David Terna Gbande, Abbas Ja'afaru Badakaya, Jamilu Adamu and Mehdi Salimi ]{David Terna Gbande$^1$, Abbas Ja'afaru Badakaya$^2$, Jamilu Adamu$^3$, Mehdi Salimi $^{4*}$(Corresponding author)\\\\
$^1$Department of Mathematical Sciences, Bayero University, Kano, and Federal Polytechnic Wannune, Benue State, Nigeria\\
$^2$Department of Mathematical Sciences, Bayero University, Kano, Nigeria\\
$^3$Department of Mathematics, Federal University, Gashu'a, Nigeria\\
$^4$Mathematics Department, Kwantlen Polytechnic University, BC, Canada}
\thanks{princedavison4@gmail.com (David Terna Gbande); ajbadakaya.mth@buk.edu.ng (Abbas Ja'afaru Badakaya); jamiluadamu88@gmail.com (Jamilu Adamu); mehdi.salimi@kpu.ca (${}^*$Corresponding author)}
\keywords{Pursuit, Evasion, Differential Game, Gronwall Constraint.}
\begin{document}
\maketitle
\begin{abstract}
 We examine pursuit and evasion problems in the space $ \mathbb{R}^{n} $ involving a lone pursuer and evader. Motion of each player is governed by a first-order linear differential equation. The players' control functions are subject to the Gronwall-type inequality. The game's duration is fixed and represented by a positive number $\vartheta $. When the state of a pursuer coincides with that of the evader, we then say the pursuit is completed. On the contrary, evasion is possible when throughout the game there is no agreement between the states of pursuer and evader. The result obtained related to the pursuit problem depends on the attainability domains of the two players. On the other hand, one sufficient condition is given for evasion to be possible.
\end{abstract}
\maketitle

\section{introduction}
Analysis of First-Order Linear Pursuit and Evasion Differential Games with Grönwall-Type Constraints

\noindent The idea of differential game problems, which encompass pursuit and evasion games is of interest to many researchers. In many of the investigations works, player motion  are piloted by  
\begin{equation}\label{AT3}
\left\{ \begin{array}{ll}
P: \dot{x}(t) +\alpha x(t)  = c(t)u(t), \ \ \ x(0) = x_{0},  \\
E:  \dot{y}(t) +\alpha y(t) = \  c(t)v(t), \ \ \ y(0) = y_{0}.
\end{array}\right
.
\end{equation}
The pursuer and evader's control functions are denoted by $u(\cdot)$ and $ v(\cdot)$ respectively,  whereas $c(t)$ represents a specified or arbitrary scalar function and $\alpha $ is a real constant.\\

\noindent The differential game, in which the players’ motions are governed by (\ref{AT3}) with $\alpha = 0$ and $c(t) = 1$, subject to an integral constraint on the players’ controls, was recently investigated in \cite{REFBad} and \cite{REFM2}.\\
In contrast, \cite{REFBad} investigated cases where the players’ control functions are constrained geometrically, while \cite{REFM2} examined a problem in which the pursuer’s control function is subject to an integral constraint and the evader’s control function is subject to a geometric constraint.\\

\noindent Pursuit and evasion problem is studied by Rilwan et al. in \cite{TY21} where players' control functions is subject to Gronwall-type constraints, the dynamic motion of players also obeyed equation  (\ref{AT3}) for $\alpha = 0$. \\

\noindent When $\alpha$ is a nonzero constant and $c(t)=1$, Ibragimov in \cite{TY5} studied a game in which the players’ motions are governed by (\ref{AT3}) under integral constraints on their controls in the Hilbert space $\mathbb{R}^{n}$. Conditions ensuring the completion of pursuit were established.\\[3mm] 
\noindent Salimi and Ferrara \cite{TY22} studied a differential game where a single evader is pursued by a finite or countable set of pursuers. Each player’s control functions are governed by integral constraints, and the game is played over a fixed time horizon. The payoff is defined as the distance between the evader and the closest pursuer at the terminal moment. In their analysis, the authors introduced the notion of the game’s value and determined the pursuers’ optimal strategies. A notable feature of their work is that the pursuers’ energy resources are treated as independent from those of the evader in carrying out the pursuit.\\

\noindent Optimal pursuit time is found by Ibragiov at al. in \cite{TY23}. The game problem was study in the space $ \mathbb{R}^{n} $ with player control functions subject to integral constraints.\\
\noindent Furthermore, a game problem where dynamic motions of players described by (\ref{AT3}) where $\alpha$ is not a zero constant and $c(t)=1$ is also studied by Usman at al. in \cite{TY26}. Control functions of the players satisfied both geometric and integral constraints. They found conditions for pursuit to be finished in each instance.\\[3mm]
\noindent The concept of how Gronwall-type constraints can be used in the theory of differential game when dynamic motions of players described by first order linear equation (\ref{AT3}) where $\alpha$ is not a zero constant and $c(t)=1$  was first considered by Samatov et al. in \cite{TY25}. They solved pursuit problem with Gronwall-type constraints imposed on control parameter of players. In \cite{Sema}, a differential evasion game in a Hilbert space involving a finite number of pursuers and a single evader is studied. The players’ control functions are subject to geometric constraints. The authors resolve the game by constructing a strategy for the evader that ensures successful evasion.\\

\noindent In the work of Rilwan and co-authors  \cite{TY21} that studied a fixed duration pursuit and evasion differential game involving one pursuer and one evader, with Gronwall-type constraints imposed on the control functions of all players and dynamics of the players are governed by the dynamic equation (\ref{AT3}) for $\alpha = 0$. These result has systematically advanced the theory of pursuit and evasion differential games under Gronwall-type constraints, addressed the existence of permissible strategies and the derivation of guaranteed capture and escape conditions. Despite the substantial progress achieved in the literature, several challenges remain unresolved. In particular, there is a need for a unified treatment of pursuit and evasion differential games governed by first-order linear dynamics (\ref{AT3}), for $\alpha \neq 0$ under Gronwall-type constraints. The present work is motivated by these gaps and aims to contribute to the theory by extending existing results, refining analytical techniques, and providing new results on guaranteed pursuit and evasion  differential games with Gronwall-type control constraints.

\section{Formulation of the problem}
\noindent This Research work extends the theory of constrained differential games beyond the classical framework of simple motion player dynamics, motivated by the pursuit and evasion frameworks of Rilwan et al. \cite{TY21} which studied pursuit and evasion games under Gronwall-type control constraints using simple linear dynamics, this research addresses the problem of analyzing pursuit and evasion differential games involving one pursuer and a alone evader governed by modified first-order linear differential equations, with the aim of establishing conditions for capture and escape under Gronwall-type constrained controls.\\
\noindent In the space $\mathbb{R}^n$, we examine a differential game problem where the players' dynamics is described by (\ref{AT3}), in which $x, y,x_{0},y_{0}, u, v \in\mathbb{R}^n $, $ c(t)$ is an arbitrary non-negative scalar function, and $ \alpha $ is a non-zero constant.

\begin{definition}
 A measurable functions $u(t) = (u_{1} ,u_{2}, \dots, u_{n})  $  and $v(t) = (v_{1} ,v_{2}, \dots, v_{n})  $ that satisfy the inequalities
 \begin{equation}\label{DAV1}
|u(t)|^{2} \leq \rho^{2} + 2k \int_{0}^{t}|u(s)|^{2}ds  \ \ \ t \geq 0,
\end{equation}
\begin{equation}\label{DAV2}
 |v(t)|^{2} \leq \sigma^{2} + 2k \int_{0}^{t}|v(s)|^{2}dt  \ \ \ t \geq 0,
\end{equation} 
where $\rho, \sigma $ and $k$ are given positive values, are called permissible control of the  pursuer $ P $ and the evader $ E $ in that order with respect to Gronwall-type constraint.
\end{definition}
 
\noindent We let $ \bar{U} $ and $ \bar{V} $ denote the set of every permissible controls of the pursuer and evader.\\
Given $ u(\cdot) \in \bar{U}$ and $ v(\cdot)\in \bar{V},$ and for any starting positions 
$ x_{0}, y_{0} $, the trajectory of the pursuer $x(t)$ and the evader $y(t)$ at any time $ t > 0 $ are given by the following equations 
\begin{equation}\label{DAV3}
x(t) = x_{0}e^{-\alpha t} +  \int_{0}^{t}e^{-\alpha(t-s)}c(s)u(s)ds ,
\end{equation}
\begin{equation}\label{DAV4}
y(t) = y_{0}e^{-\alpha t} +  \int_{0}^{t}e^{-\alpha(t-s)}c(s)v(s)ds.
\end{equation}

\begin{definition}
 A function $U(t)=U(x_{0}, y_{0}, t, v(t)),$ is called the pursuer’s strategy, if for any $ v(\cdot)\in \bar{V} $, the system 
  \begin{equation}\label{AMIMM}
\left\{ \begin{array}{ll}
P: \dot{x}(t) + \alpha x(t)= c(t)U(x_{0}, y_{0}, t, v(t)), \ \ \ x(0) = x_{0}, \\
E:  \dot{y}(t)+\alpha y(t) = \ c(t)v(t), \ \ \ y(0) = y_{0},
\end{array}\right.
\end{equation}
is uniquely solvable with solution $(x(t), y(t))$ and 

 $$ |U(x_{0}, y_{0}, t, v(t))|^{2} \leq \rho^{2} + 2k \int_{0}^{t}|U(x_{0}, y_{0}, t, v(t))|^{2}ds. $$
If every control that the strategy $ U $ generates is permissible, then the strategy is considered permissible.
\end{definition}

\begin{definition}
 A function $V(x_{0}, y_{0}, t),$ is called the evader’s strategy, if for any $ u(\cdot)\in \bar{U} $, the system 
  \begin{equation}\label{MMM}
\left\{ \begin{array}{ll}
P: \dot{x}(t) +\alpha x(t)= c(t)u(t), \ \ \ x(0) = x_{0} ,\\
E:  \dot{y}(t)+\alpha y(t) = \  c(t)V(x_{0}, y_{0}, t), \ \ \ y(0) = y_{0},  
\end{array}\right.
\end{equation} is uniquely solvable with solution $(x(t), y(t))$ and,
$$ |V(x_{0}, y_{0}, t)|^{2} \leq \rho^{2} + 2k \int_{0}^{t}|V(x_{0}, y_{0}, t)|^{2}ds. $$ 
If every control that the strategy $ U $ generates is permissible, then the strategy is considered permissible.
\end{definition} 
\begin{definition}
If the pursuer’s strategy guarantees that, for every permissible control of the evader, the equality $x(\tau) = y(\tau)$ holds for some $\tau \in (0,\theta]$, then the pursuit is said to be completed.
\end{definition}
\begin{definition}
Evasion is said to be possible if there exits a strategy of the evader such
that for any permissible control of the pursuer the relations $x(t) \neq y(t)$
hold for all  $t > 0.$
\end{definition}
\textbf{Questions}:\\
The followings are the research questions:
\begin{enumerate}
\item[i.] Which sufficient condition(s) guarantee the completion of pursuit within a finite time?
\item[ii.] Which sufficient condition(s) ensure that evasion remains possible at all times?
\end{enumerate}

\section{Main results}
\noindent The primary outcome of the paper is presented in this section; earlier, some previous findings that are helpful in demonstrating the research's main outcomes are presented. 
\begin{lemma}\label{TB} \cite{TY21}
	Let $ \varphi $ and $ k $ be positive real values, and  $ w(t)$ be a measurable function for $ t \geq 0$, then, the inequality 
\end{lemma}
\begin{equation}\label{TB1}
 \displaystyle|w(t)|\leq \varphi e^{k\int_{0}^{t}c(s)ds}, 
\end{equation}
is true when
\begin{equation}\label{TB2}
|w(t)|^2\leq \varphi^2 + 2k\int_{0}^{t}c(s)|w(s)|^2ds, 
\end{equation} 
holds.
\begin{lemma}\label{TK} \cite{TY21}
The relation\\
\begin{equation}\label{DAV10a}
 e^{2k \int_{0}^{t} c(s)ds} = 1 + 2k \int_{0}^{t} c(s)e^{2k \int_{0}^{s} c(r)dr}ds,
\end{equation}
holds for all non-negative real-value functions $ c(s) $. 
\end{lemma}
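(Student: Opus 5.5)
We prove the identity (\ref{DAV10a}) of Lemma \ref{TK} by differentiating the exponential and then integrating back. Put
\[
C(t)=\int_{0}^{t}c(s)\,ds, \qquad F(t)=e^{2kC(t)} .
\]
The natural standing assumption is that $c$ is non-negative and locally integrable. This is needed for the integrals in the statement to make sense at all.

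\textbf{Step 1.} $C$ is absolutely continuous on every bounded interval $[0,t]$, with $C'(s)=c(s)$ for almost every $s$ (Lebesgue differentiation theorem). If $c$ is continuous, this holds for every $s$ by the fundamental theorem of calculus.

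\textbf{Step 2.} The map $z\mapsto e^{2kz}$ is $C^1$, hence Lipschitz on the bounded range $C([0,t])$. Composing a locally Lipschitz function with an absolutely continuous one gives an absolutely continuous function, so $F$ is absolutely continuous on $[0,t]$. The chain rule then gives, for almost every $s$,
\[
F'(s)=2k\,c(s)\,e^{2kC(s)} .
\]

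\textbf{Step 3.} Apply the fundamental theorem of calculus for absolutely continuous functions:
\[
F(t)-F(0)=\int_{0}^{t}F'(s)\,ds .
\]
Since $F(0)=e^{0}=1$, this reads
\[
e^{2k\int_0^t c(s)ds}=1+2k\int_{0}^{t}c(s)\,e^{2k\int_0^s c(r)dr}\,ds ,
\]
which is exactly (\ref{DAV10a}).

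\textbf{Main obstacle.} The only delicate point is regularity. The statement says ``all non-negative real-valued functions'', but the identity is meaningful only when $\int_0^t c$ is finite, so we restrict to locally integrable $c$. In that case the chain rule for absolutely continuous functions (Step 2) is the one nontrivial ingredient.

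Non-negativity of $c$ plays no role in the computation. It matters only in the later application: combined with Lemma \ref{TB} it makes $\varphi e^{k\int_0^t c}$ a non-decreasing bound. It also ensures that the function $w(t)=\varphi e^{kC(t)}$ attains equality in (\ref{TB2}), which follows by multiplying (\ref{DAV10a}) by $\varphi^2$.
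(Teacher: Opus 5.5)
Your proof is correct. The paper gives no argument for Lemma~\ref{TK}; it only quotes it from Rilwan et al. So there is no competing route to compare against. Differentiating $e^{2kC(t)}$ and integrating back is the standard justification, and your handling of regularity (local integrability of $c$, absolute continuity of $e^{2kC}$, the a.e.\ chain rule) is more careful than anything the paper supplies.

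One small correction to your closing remarks concerns the equality case. That $w(t)=\varphi e^{kC(t)}$ gives equality in (\ref{TB2}) follows from (\ref{DAV10a}) alone and does not use $c\ge 0$. Non-negativity matters in two other places. First, it is needed for the Gr\"onwall step of Lemma~\ref{TB}, which can fail when $c$ takes negative values. Second, since $k>0$, it lets (\ref{DAV10a}) hold in $[0,\infty]$ even without local integrability. Because $e^{2kC(s)}\ge 1$, both sides equal $+\infty$ as soon as $\int_0^t c(s)\,ds=\infty$.
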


\noindent \textbf{Player's attainability domain}

\begin{proposition}
The attainability domain of
\begin{itemize}
\item[i.] the pursuer $P$ from the initial state $x_{0}$ at time $t=0$
to the time $t=\vartheta $ is the ball $H_{P}(x_{0}e^{-\alpha\vartheta}, R(0,\vartheta) )$  
of radius $R(0,\vartheta) $  centred at $x_{0}e^{-\alpha\vartheta}$,\\
where $\displaystyle R(0,\vartheta)  = \left( \rho^2 + 2k\int_{0}^{\vartheta} c(s)|u(s)|^2 ds \right)^\frac{1}{2} \int_{0}^{\vartheta} e^{-\alpha(\vartheta -s)}c(s)ds$.

\item[ii.] the evader $E$ from the initial state $y_{0}$ at time $t=0$
to the time $t=\vartheta $ is the ball $H_{E}(x_{0}e^{-\alpha\vartheta}, r(0,\vartheta) )$  
of radius $r(0,\vartheta) $  centred at $y_{0}e^{-\alpha\vartheta}$,\\
where $\displaystyle r(0,\vartheta)  = \left( \sigma^2 + 2k\int_{0}^{\vartheta} c(s)|v(s)|^2 ds \right)^ \frac{1}{2}\int_{0}^{\vartheta} e^{-\alpha(\vartheta -s)}c(s)ds $.
 \end{itemize}  
\end{proposition}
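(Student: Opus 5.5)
The plan is to prove each part in two directions: every reachable point lies in the stated ball, and every point of the ball is reachable by some permissible control. I treat the pursuer; the evader's argument is the same with $\rho$ replaced by $\sigma$, $u$ by $v$, and $x_0$ by $y_0$. Throughout, I read the radius as the paper writes it, with the factor $\bigl(\rho^2+2k\int_0^\vartheta c(s)|u(s)|^2ds\bigr)^{1/2}$ regarded as a fixed Grönwall bound. This is the quantity that dominates $|u(s)|$ on $[0,\vartheta]$, in the sense of Lemma \ref{TB} applied with $w=u$. I also assume the constraint is taken in the $c$-weighted form (\ref{TB2}), so that Lemma \ref{TB} applies directly.

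\textbf{Inclusion.} By (\ref{DAV3}),
\[
x(\vartheta)-x_0e^{-\alpha\vartheta}=\int_0^\vartheta e^{-\alpha(\vartheta-s)}c(s)u(s)\,ds .
\]
Because $c\ge 0$ and $e^{-\alpha(\vartheta-s)}>0$, the triangle inequality for Bochner integrals gives
\[
\bigl|x(\vartheta)-x_0e^{-\alpha\vartheta}\bigr|\le \int_0^\vartheta e^{-\alpha(\vartheta-s)}c(s)\,|u(s)|\,ds .
\]
Next I bound $|u(s)|$ on $[0,\vartheta]$. For $s\le\vartheta$ the constraint gives
\[
|u(s)|^2\le \rho^2+2k\int_0^s c|u|^2\le \rho^2+2k\int_0^\vartheta c|u|^2,
\]
where the second inequality holds because the integrand is nonnegative and the upper limit increases. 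Taking square roots yields $|u(s)|\le \bigl(\rho^2+2k\int_0^\vartheta c|u|^2\bigr)^{1/2}$. Pulling this constant out of the integral shows $|x(\vartheta)-x_0e^{-\alpha\vartheta}|\le R(0,\vartheta)$. If one prefers a radius that does not depend on $u$, Lemma \ref{TB} gives the alternative bound $|u(s)|\le\rho e^{k\int_0^s c}$, and Lemma \ref{TK} converts $\rho^2+2k\int_0^\vartheta c|u|^2$ into at most $\rho^2e^{2k\int_0^\vartheta c}$.

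\textbf{Reverse inclusion.} Fix a target $z$ with $|z-x_0e^{-\alpha\vartheta}|\le R$, and write $z-x_0e^{-\alpha\vartheta}=\lambda R\,e$ with $|e|=1$ and $\lambda\in[0,1]$. I take the constant-direction control $u(s)=\lambda M e$, where $M$ is the bound above. It reaches
\[
x(\vartheta)=x_0e^{-\alpha\vartheta}+\lambda M e\int_0^\vartheta e^{-\alpha(\vartheta-s)}c(s)\,ds=z .
\]
Permissibility is immediate: since $\lambda M\le\rho$ is achievable, take $|u|\equiv\lambda\rho\le\rho$, which satisfies $|u(t)|^2\le\rho^2\le\rho^2+2k\int_0^t c|u|^2$. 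For the sharper, Grönwall-saturated radius I would instead use
\[
u(s)=\lambda\rho\, e^{k\int_0^s c(r)dr}\,e .
\]
Lemma \ref{TK} shows this choice satisfies the constraint with equality when $\lambda=1$, and hence satisfies it for every $\lambda\le1$. The point reached is $x_0e^{-\alpha\vartheta}$ plus $\lambda$ times the extremal displacement. Since every direction $e$ and every fraction $\lambda$ are available, the attainable set is the full ball.

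\textbf{Main obstacle.} The hard step is making the radius consistent between the two inclusions. As written, $R(0,\vartheta)$ depends on $u$, so I must fix the extremal control first and then check that the upper bound and the constructed control give the same number. With the saturated control $u^*(s)=\rho e^{k\int_0^s c}e$, Lemma \ref{TK} gives
\[
\rho^2+2k\int_0^\vartheta c|u^*|^2=\rho^2e^{2k\int_0^\vartheta c}.
\]
So the radius becomes $\rho e^{k\int_0^\vartheta c}\int_0^\vartheta e^{-\alpha(\vartheta-s)}c(s)\,ds$, and the inclusion step, using $|u(s)|\le \rho e^{k\int_0^\vartheta c}$, yields exactly this same bound. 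Identifying this consistent radius is the step I would check most carefully. Everything else is the triangle inequality and a straight-line construction.
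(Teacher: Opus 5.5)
\textbf{The reverse inclusion fails, so the two radii never match.}

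Your inclusion half is the paper's argument. Your first reverse construction, the constant control $u\equiv\lambda Me$, is essentially the paper's constant control $u\equiv(x^*-x_0e^{-\alpha\vartheta})/\int_0^\vartheta e^{-\alpha(\vartheta-t)}c(t)\,dt$. That control is not permissible in general, and the paper never checks it. Letting $t\to0^+$ in the constraint shows that a constant control of size $m$ is admissible only if $m\le\rho$. So constant controls reach only the ball of radius $\rho\int_0^\vartheta e^{-\alpha(\vartheta-s)}c(s)\,ds$. Your patch ``take $|u|\equiv\lambda\rho$'' does not repair this. It moves the endpoint to distance $\lambda\rho\int_0^\vartheta e^{-\alpha(\vartheta-s)}c(s)\,ds$ from $x_0e^{-\alpha\vartheta}$, which is not $z$ unless $M=\rho$.

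The decisive gap is in your ``Main obstacle'' paragraph. The saturated control $u^*(s)=\rho e^{k\int_0^s c(r)dr}e$ is permissible, and it does give $\rho^2+2k\int_0^\vartheta c|u^*|^2=\rho^2e^{2k\int_0^\vartheta c}$. But the displacement it produces has length
\[
R^*=\rho\int_0^\vartheta e^{-\alpha(\vartheta-s)}c(s)\,e^{k\int_0^s c(r)dr}\,ds ,
\]
not $\rho e^{k\int_0^\vartheta c}\int_0^\vartheta e^{-\alpha(\vartheta-s)}c(s)\,ds$. You replaced the running factor $e^{k\int_0^s c}$ by its terminal value. For $k>0$ and $c$ not a.e.\ zero on $[0,\vartheta]$, $R^*$ is strictly smaller. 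For example, with $c\equiv1$ and $\alpha=k=\vartheta=\rho=1$, $R^*=\sinh 1\approx1.18$ against $e-1\approx1.72$.

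Conversely, you already wrote down the pointwise bound $|u(s)|\le\rho e^{k\int_0^s c(r)dr}$ from Lemma~\ref{TB}, but then coarsened it to upper limit $\vartheta$. Used pointwise, it shows that \emph{no} permissible control gets farther than $R^*$. So with $R(0,\vartheta)$ read at its maximum over permissible controls (your reading, attained by $u^*$), the reverse inclusion is false and no argument can close it. The paper's proof only appears to because it skips the permissibility check. Leaving the radius $u$-dependent does not rescue the statement either: $u\equiv0$ already gives a ball strictly smaller than $R^*$.

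\textbf{What your ingredients do prove.} The corrected statement holds. The attainability domain is the closed ball of radius $R^*$ about $x_0e^{-\alpha\vartheta}$. For the evader it is the ball of radius $\sigma\int_0^\vartheta e^{-\alpha(\vartheta-s)}c(s)e^{k\int_0^s c(r)dr}\,ds$ about $y_0e^{-\alpha\vartheta}$. The inclusion comes from the pointwise Gr\"onwall bound. The reverse inclusion comes from $u(s)=\lambda\rho e^{k\int_0^s c(r)dr}e$ with $0\le\lambda\le1$, whose permissibility you correctly derive from Lemma~\ref{TK}.
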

 \begin{proof}
 For the verification of (i), with (\ref{DAV1}) and (\ref{DAV3}) we obtain
 \begin{align*}
\left\vert x(\vartheta )- x_{0}e^{-\alpha \vartheta } \right\vert 
 &\leq   \int_{0}^{\vartheta}e^{-\alpha(\vartheta-s)} c(s)|u(s)|ds
\\&\leq   \int_{0}^{\vartheta}e^{-\alpha(\vartheta-s)} c(s)\left( \rho^2 + 2k\int_{0}^{s} c(r)|u(r)|^{2}dr \right)^\frac{1}{2}ds
\\&\leq \left( \rho^2 + 2k\int_{0}^{\vartheta} c(r)|u(r)|^2 dr \right) ^\frac{1}{2}\int_{0}^{\vartheta} e^{-\alpha(\vartheta -s)}c(r)dr
\\&\leq \left( \rho^2 + 2k\int_{0}^{\vartheta} c(s)|u(s)|^2 ds \right) ^\frac{1}{2}\int_{0}^{\vartheta} e^{-\alpha(\vartheta -s)}c(r)dr:=\displaystyle R(0,\vartheta).
\end{align*}
 Hence $ \left\vert x(\vartheta )- x_{0}e^{-\alpha \vartheta } \right\vert \leq  R(0,\vartheta)$.\\
 
\noindent Also let $x^{*}\in H_{P}(x_{0}e^{-\alpha\vartheta}, R(0,\vartheta) )$, if the pursuer uses the control
  $$   u(t)=\frac{(x^{*}-x_{0}e^{-\alpha\vartheta})}{\int_{0}^{\vartheta} e^{-\alpha(\vartheta -t)}c(t)dt}, \ \ 0\leq t \leq \vartheta, $$
  then we have
\begin{align*}
 x(\vartheta)&= x_{0}e^{-\alpha\vartheta}+ \int_{0}^{\vartheta}e^{-\alpha(\vartheta-t)} c(t)u(t)dt
\\&= x_{0}e^{-\alpha\vartheta}+ \int_{0}^{\vartheta}e^{-\alpha(\vartheta-t)} c(t)\frac{(x^{*}-x_{0}e^{-\alpha\vartheta})}{\int_{0}^{\vartheta} e^{-\alpha(\vartheta -t)}c(t)dt}dt
=x^{*}.
\end{align*}
This proves (i). Using similar argument, one can prove (ii). With this, the verification of the proposition is complete.
\end{proof}

\noindent \textbf{Pursuit Differential Game} \\
To state the criteria for pursuit to be completed in the game problem (\ref{AT3}-\ref{DAV2}), we introduce the following notations. Let $\varepsilon = \frac{y_{0} - x_{0}}{|y_{0} - x_{0}|}$ and $ \lambda = \rho^2 -\sigma^2 $. Define the half space $ X $ by
\begin{equation}\label{DAV11}
 X: =\lbrace z \in\mathbb{R}^n :2\langle y_{0}e^{-\alpha\vartheta} - x_{0}e^{-\alpha\vartheta} , z \rangle \leq \Delta + |y_{0}e^{-\alpha\vartheta}|^2 -|x_{0}e^{-\alpha\vartheta}|^2 \rbrace, 
\end{equation}
where
\begin{align*}
\Delta &=R^2(0,\vartheta) -r^2(0,\vartheta)
\\&= \left( \rho^2 + 2k\int_{0}^{\vartheta} c(s)|u(s)|^2 ds \right)  \left(\int_{0}^{\vartheta} e^{-\alpha(\vartheta -s)}c(s)ds\right)^2 
\\&- \left( \sigma^2 + 2k\int_{0}^{\vartheta} c(s)|v(s)|^2 ds \right)  \left(\int_{0}^{\vartheta} e^{-\alpha(\vartheta -s)}c(s)ds\right)^2 
\\&=\lambda e^{2k \int_{0}^{\vartheta} c(s)ds}\left(\int_{0}^{\vartheta} e^{-\alpha(\vartheta -s)}c(s)ds\right)^2.
\end{align*}
 
\begin{theorem}\label{TTT}
In game (\ref{AT3}) - (\ref{DAV2}), pursuit can be completed if $y(\vartheta) \in X$ and $\lambda \geq 0$.

\end{theorem}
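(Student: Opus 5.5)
The approach is the classical "attainability domain" argument of Petrosyan type, adapted to the exponential dynamics. The half space $X$ of (\ref{DAV11}) is exactly the set of points $z$ with
\[
|z-x_{0}e^{-\alpha\vartheta}|^2-R^2(0,\vartheta)\leq |z-y_{0}e^{-\alpha\vartheta}|^2-r^2(0,\vartheta).
\]
To see this, I will expand both squares and use $\Delta=R^2-r^2$. When $\lambda\geq 0$, so that $R(0,\vartheta)\geq r(0,\vartheta)$, this gives the following. Every point of the evader's attainability ball $H_E(y_{0}e^{-\alpha\vartheta},r(0,\vartheta))$ that lies in $X$ also lies in the pursuer's ball $H_P(x_{0}e^{-\alpha\vartheta},R(0,\vartheta))$. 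Indeed, if $|z-y_0e^{-\alpha\vartheta}|\le r$, then
\[
|z-x_0e^{-\alpha\vartheta}|^2\le R^2+|z-y_0e^{-\alpha\vartheta}|^2-r^2\le R^2 .
\]
So the first step is this elementary inclusion $H_E\cap X\subset H_P$. It uses the Proposition for the description of both attainability domains, together with Lemma \ref{TB} and Lemma \ref{TK} to bound the Gronwall quantity $\rho^2+2k\int_0^\vartheta c|u|^2$ by $\rho^2 e^{2k\int_0^\vartheta c}$ (and similarly for $\sigma$). This is what produces the closed form of $\Delta$.

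The second step is to turn this static inclusion into a strategy. I would not use a strategy that merely steers to the point $y(\vartheta)$, since that point is not known in advance. Instead I would use a parallel-approach-type strategy
\[
U(t)=v(t)+\frac{(y_0-x_0)e^{-\alpha\vartheta}}{\int_0^\vartheta e^{-\alpha(\vartheta-s)}c(s)\,ds}\,\varepsilon' ,
\]
normalised as in the proof of part (i) of the Proposition. Substituting it into (\ref{DAV3})–(\ref{DAV4}) gives
\[
x(\vartheta)-y(\vartheta)=(x_0-y_0)e^{-\alpha\vartheta}+(y_0-x_0)e^{-\alpha\vartheta}=0,
\]
so capture occurs at $\tau=\vartheta$ for every evader control. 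The remaining task is admissibility of $U$, i.e.\ that $U$ satisfies the Gronwall constraint (\ref{DAV1}). I would check this by estimating $|U(t)|^2$ through $|v(t)|^2$ plus a cross term, using (\ref{DAV2}) for $v$. The surplus $\lambda=\rho^2-\sigma^2\ge 0$ has to absorb the constant correction term and the cross term $2\langle v(t),\cdot\rangle$.

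The main obstacle is this last estimate. The cross term is controlled only after one exploits the hypothesis $y(\vartheta)\in X$. Writing $y(\vartheta)-y_0e^{-\alpha\vartheta}=\int_0^\vartheta e^{-\alpha(\vartheta-s)}c(s)v(s)\,ds$, the condition $y(\vartheta)\in X$ becomes an integral bound on $\langle y_0-x_0,v\rangle$. I would first try to show admissibility in integrated form, which is the form in which $R$ and $r$ were derived. If the pointwise form fails, I would fall back on a "pursue-then-match" strategy: the pursuer chooses $U(t)=v(t)$ until the moment the evader's reachable region is forced into $X$, and then uses the steering control of the Proposition toward $y(\vartheta)$. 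The inclusion $H_E\cap X\subset H_P$ from the first step guarantees that this target is reachable with an admissible control.
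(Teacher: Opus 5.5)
Your Step 2 fails, and the obstacle you flag is not a technicality. Put $w:=(y_0-x_0)e^{-\alpha\vartheta}\big/\int_0^\vartheta e^{-\alpha(\vartheta-s)}c(s)\,ds$ and $U=v+w$. Then $|U(t)|^2=|v(t)|^2+2\langle v(t),w\rangle+|w|^2$. The constraint (\ref{DAV1}) must hold at every $t$, and its right-hand side tends to $\rho^2$ as $t\to0$, so there is no ``integrated form'' to fall back on.

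Concretely, take $\lambda=0$ and $0<|y_0-x_0|$ small. Let the evader use $|v(t)|=\sigma e^{k\int_0^t c(r)\,dr}$, which is admissible with equality by Lemma \ref{TK}. Direct it along $\varepsilon$ on a short interval $[0,\delta]$ and along $-\varepsilon$ afterwards. This still gives $y(\vartheta)\in X$, yet $|U(t)|\to\sigma+|w|>\rho$ as $t\to0$. For $\lambda>0$ the same happens whenever $|w|>\rho-\sigma$.

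The hypothesis $y(\vartheta)\in X$ is a single terminal condition on $\int_0^\vartheta e^{-\alpha(\vartheta-s)}c(s)\langle v(s),\varepsilon\rangle\,ds$, so it cannot control $v$ near $t=0$. Moreover, the size of $w$ is set by $|y_0-x_0|$, not by the surplus $\lambda e^{2k\int_0^t c(r)\,dr}$ actually available at time $t$.

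Your fallback has the defect you pointed out yourself.
\begin{itemize}
\item Steering toward $y(\vartheta)$ from an intermediate time uses the future of $v$.
\item Nothing forces a moment at which the evader's remaining reachable set lies in $X$.
\item $H_E\cap X\subset H_P$ is an open-loop statement about the initial configuration, not a strategy.
\item It rests on the converse half of the Proposition, which is false with the radius used to compute $\Delta$. The constant steering control there has norm up to $\rho e^{k\int_0^\vartheta c(r)\,dr}>\rho$, violating (\ref{DAV1}) at $t=0$. By Lemma \ref{TB}, the true reachable ball has radius $\rho G$, where $G:=\int_0^\vartheta e^{-\alpha(\vartheta-s)}c(s)e^{k\int_0^s c(r)\,dr}ds$.
\end{itemize}

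The missing idea is the paper's parallel-approach strategy (\ref{DAV7}),
$U=v-\langle v,\varepsilon\rangle\varepsilon+\varepsilon\sqrt{\lambda e^{2k\int_0^t c(r)\,dr}+\langle v,\varepsilon\rangle^2}$.
It copies the component of $v$ orthogonal to $\varepsilon$ and modifies the $\varepsilon$-component nonlinearly, so that $|U|^2=|v|^2+\lambda e^{2k\int_0^t c(r)\,dr}$ exactly. There is no cross term, and admissibility at every $t$ follows from (\ref{DAV2}) and Lemma \ref{TK}, needing only $\lambda\ge0$. The capture time is not fixed in advance. One writes $y(t)-x(t)=\varepsilon h(t)$ with $h(0)>0$, and $h(\vartheta)\le0$ follows from three ingredients:
\begin{itemize}
\item the inequality $\int|g|\ge|\int g|$ for a planar vector $g$;
\item the upper bound on $\int_0^\vartheta e^{-\alpha(\vartheta-s)}c(s)\langle v(s),\varepsilon\rangle\,ds$ supplied by $y(\vartheta)\in X$;
\item monotonicity of $t\mapsto a+t-\sqrt{\Delta+t^2}$, with $a=|y_0-x_0|e^{-\alpha\vartheta}$.
\end{itemize}
So the hypothesis enters the capture estimate, not admissibility.

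One caution applies to your Step 1 and to the paper alike. The Minkowski step produces $\lambda G^2$, and the argument closes only if $\Delta=\lambda G^2$. The paper's $\Delta=\lambda e^{2k\int_0^\vartheta c(r)\,dr}\bigl(\int_0^\vartheta e^{-\alpha(\vartheta-s)}c(s)\,ds\bigr)^2$ is larger, so its final substitution goes the wrong way. With that $\Delta$ the statement actually fails. Take $c\equiv1$, $v\equiv0$ and $\sigma$ small. Then there are $x_0,y_0$ with $y(\vartheta)\in X$ but $|y_0-x_0|>\rho\int_0^\vartheta e^{(\alpha+k)s}\,ds$, so no admissible pursuer control meets the evader on $(0,\vartheta]$. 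With $R=\rho G$ and $r=\sigma G$, your inclusion $H_E\cap X\subset H_P$ and the paper's argument both become consistent.
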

\begin{proof}
Allow the pursuer to employ this strategy:
\begin{equation}\label{DAV7} 
U(t, v) =  v(t)-\langle v(t), \varepsilon \rangle \varepsilon + \varepsilon \sqrt{\lambda e^{2k \int_{0}^{t} c(s)ds}+\langle v(t), \varepsilon\rangle^{2}},
\end{equation}
where $ v(\cdot) \in \bar{V} $ .\\
It is readily demonstrable that strategy
 (\ref{DAV7}) fulfills the relations
\begin{equation}\label{DAV8}
c(t)U(t, v)= c(t)v(t) - \phi(t)\varepsilon ,
\end{equation}
\begin{equation}\label{DAV9}
||U(t,v)||^2 = ||v(t)||^2 +  \lambda e^{2k \int_{0}^{t} c(s)ds} ,  for \ \ \ all\ \ \ t \geq 0, 
\end{equation}
whereas $$ \phi(t) = c(t)\left(\langle v(t), \varepsilon \rangle \pm  \sqrt{\lambda e^{2k \int_{0}^{t} v(s)ds}+\langle v(t), \varepsilon\rangle^{2}}\right).$$
With the help of (\ref{DAV8}), (\ref{DAV9}), and Lemma (\ref{TK}), it is possible to determine if the strategy (\ref{DAV7}) is permissible.
 Indeed  
 \begin{align*}
|U(t, v)|^2 &= |v(t)|^2 + \lambda e^{2k \int_{0}^{t} c(s)ds}
\\&\leq \sigma^2 + \lambda e^{2k \int_{0}^{t} c(s)ds} + 2k\int_{0}^{t} c(s)|v(t)|^2 ds
\\& =  \sigma^2 + \lambda + 2k\int_{0}^{t} c(s)\lambda e^{2k \int_{0}^{s} c(r)dr}ds + 2k\int_{0}^{t} c(s)|v(t)|^2 ds
\\& =  \rho^2 + 2k\int_{0}^{t} c(s)\left(|v(s)|^2 + \lambda e^{2k \int_{0}^{s} c(r)dr}\right)ds
\\& =  \rho^2 + 2k\int_{0}^{t} c(s)|U(s, v(s)|^2ds. 
\end{align*}
Explicitly, $ \displaystyle |U(t, v)|^2 \leq \rho^2 + 2k\int_{0}^{t} c(s)|U(s, v(s)|^2ds.$\\
Assuming the validity of the theorem's hypothesis, strategy (\ref{DAV7}) holds for any $ t $ between the interval $ (0,\tau ]$  for $\tau $ in $ (0, \theta]$.\\
Suppose,
   \begin{equation}\label{DAV12}
 U(t,v) = v(t),
\end{equation}
where the time $x(\tau) = y(\tau)$ is denoted by $\tau$, strategy (\ref{DAV12}) is permissible in the time interval $ [\tau, \theta]$ given that
 \begin{align*}
|U(t, v)|^2 &= \sigma^2 + 2k\int_{\tau}^{t} c(s)|v(s)|^2ds
\\&\leq \rho^2 + 2k\int_{0}^{t} c(s)\left(|U(s , v)|^2 - \lambda e^{2k \int_{0}^{s} c(r)dr} \right)ds
\\&\leq \rho^2 + 2k\int_{0}^{t} c(s)|U(s , v)|^2 ds.
\end{align*}
Now let $ x_{0} \neq y_{0} $. From (\ref{DAV3}) and (\ref{DAV4}) we have $ y(t)- x(t)= \varepsilon h(t)$ whereas\\
$\displaystyle h(t) = |y_{0} -x_{0}|e^{-\alpha t } + \int_{0}^{t} e^{-\alpha(t - s)}c(s)\langle v(s),\varepsilon\rangle ds -\int_{0}^{t} e^{-\alpha(t - s)}c(s)\sqrt{\lambda e^{2k \int_{0}^{t} c(s)ds} + \langle v(s), \varepsilon \rangle^2 }ds. $\\
Clearly, $ h(0) = |y_{0} -x_{0}|> 0 $, and hence the condition of Theorem (\ref{TTT}) follows if we can establish $ h(\vartheta)\leq 0,$ this will means that, for some $\tau\in [0,\vartheta ]$, $h(\tau)= 0 $,
 \\
to that aim, we take into account the subsequent vector function:
$$ g(t,s): = \left(\sqrt{\lambda} e^{-\alpha(t - s)}c(t)e^{k \int_{0}^{t} c(s)ds} ,   e^{-\alpha(t - s)}c(t)\langle v(t), \varepsilon \rangle \right).$$\\
Therefore \\

\begin{align*}
  \int_{0}^{\vartheta} e^{-\alpha(\vartheta - s)}c(s)\sqrt{\lambda e^{2k \int_{0}^{t} c(s)ds} + \langle v(s), \varepsilon \rangle^2}ds &= \int_{0}^{\vartheta}|g(s)|ds 
\\&\geq \left\vert\int_{0}^{\vartheta}g(s)ds\right\vert
\\& = M^{\frac{1}{2}}    ,
\end{align*}

where $ \displaystyle M   = \lambda\left(\int_{0}^{\vartheta} e^{-\alpha(\theta - s)}c(s)e^{k \int_{0}^{s} c(r)dr}ds \right)^2 +\left(\int_{0}^{\vartheta} e^{-\alpha(\vartheta - s)}c(s)\langle v(s), \varepsilon \rangle ds \right)^2 $ .\\
Then \\
\begin{equation}\label{DAV13}
 h(\vartheta) \leq |y_{0} -x_{0}|e^{-\alpha \vartheta } + \int_{0}^{\vartheta} e^{-\alpha(\vartheta - s)}c(s)\langle v(s), \varepsilon \rangle ds - M^{\frac{1}{2}}.
\end{equation}
The condition $ y(\vartheta)\in X $ implies that
\begin{equation}\label{DAV14}
 2\langle y_{0}e^{-\alpha\vartheta} - x_{0}e^{-\alpha\vartheta} , y(\vartheta) \rangle \leq \Delta + |y_{0}e^{-\alpha\vartheta}|^2  -|x_{0}e^{-\alpha\vartheta}|^2.
\end{equation}
Since $ y_{0}e^{-\alpha\vartheta} - x_{0}e^{-\alpha\vartheta} = |y_{0}e^{-\alpha\vartheta} - x_{0}e^{-\alpha\vartheta}| \varepsilon $ and using (\ref{DAV3}), we have\\
 \begin{align*}
 \langle  \varepsilon  , y(\vartheta) \rangle &= \frac{\langle y_{0}e^{-\alpha\vartheta} - x_{0}e^{-\alpha\vartheta},y(\vartheta)  \rangle}{|y_{0}e^{-\alpha\vartheta} - x_{0}e^{-\alpha\vartheta}|}  
\\& \leq \dfrac{ \Delta + |y_{0}e^{-\alpha\vartheta}|^2  -|x_{0}e^{-\alpha\vartheta}|^2}{2|y_{0}e^{-\alpha\vartheta} - x_{0}e^{-\alpha\vartheta}|} = d.
\end{align*} 
Also,
\begin{align*}
  \langle  \varepsilon  , y(\vartheta) \rangle &= \langle  \varepsilon , y_{0}e^{-\alpha\vartheta} + \int_{0}^{\vartheta} e^{-\alpha(\vartheta - s)}c(s)v(s)ds \rangle 
\\& =\langle \varepsilon , y_{0}e^{-\alpha\vartheta} \rangle + \int_{0}^{\vartheta} e^{-\alpha(\vartheta - s)}c(s)\langle v(s) ,\varepsilon \rangle ds.
\end{align*}
 This implies
\begin{equation}\label{DAV16}
\int_{0}^{\vartheta} e^{-\alpha(\vartheta - s)}c(s)\langle v(s) ,\varepsilon \rangle ds \leq d - \langle  \varepsilon  , y_{0}e^{-\alpha\vartheta} \rangle.
\end{equation}
Since the function $\psi(t) = |y_{0}e^{-\alpha\vartheta} - x_{0}e^{-\alpha\vartheta}| + t -(\Delta + t^2)^\frac{1}{2} $ is a function of $ t $ that increases, therefore (\ref{DAV13}) and (\ref{DAV16}) imply that,
\begin{equation}\label{DAV17}
h(\vartheta) \leq |y_{0}e^{-\alpha\vartheta} - x_{0}e^{-\alpha\vartheta}|  + d -\langle \varepsilon ,y_{0}e^{-\alpha\vartheta} \rangle - \left(\Delta + (d- \langle  \varepsilon  , y_{0}e^{-\alpha\vartheta} \rangle )^2\right)^\frac{1}{2}.
\end{equation}
It can be verified that\\
$$ (|y_{0}e^{-\alpha\vartheta} - x_{0}e^{-\alpha\vartheta}| + (d -\langle  \varepsilon  ,y_{0}e^{-\alpha\vartheta} \rangle)^2 = \Delta + (d- \langle  \varepsilon  , y_{0}e^{-\alpha\vartheta} \rangle )^2. $$
Hence $ h(\vartheta) \leq 0 $. Thus, $ h(\tau) = 0$ for certain
 $ \tau $. In other words, $x(\tau) = y(\tau)$.
\end{proof}

\noindent \textbf{Evasion Differential Game}

\begin{theorem}\label{TTTk}
In game (\ref{AT3})- (\ref{DAV2}), evasion is possible whenever $ \rho < \sigma $.
\end{theorem}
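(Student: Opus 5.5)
Evasion will follow from an explicit evader strategy that keeps the separation $y(t)-x(t)$ strictly nonzero for every permissible pursuer control, so that the relations $x(t)\neq y(t)$ hold for all $t>0$. If $x_0=y_0$ evasion fails trivially at $t=0$, so assume $x_0\neq y_0$ and put $\varepsilon=\frac{y_0-x_0}{|y_0-x_0|}$. The evader runs straight along $\varepsilon$ at the largest rate its Gronwall budget permits:
\[
V(t)=\sigma e^{k\int_0^t c(s)ds}\,\varepsilon .
\]
Permissibility is checked with Lemma \ref{TK}:
\[
|V(t)|^2=\sigma^2 e^{2k\int_0^t c(s)ds}=\sigma^2+2k\int_0^t c(s)\,\sigma^2 e^{2k\int_0^s c(r)dr}ds=\sigma^2+2k\int_0^t c(s)|V(s)|^2ds .
\]
So $V$ satisfies the constraint (with the weight $c(s)$, as the paper's computations use) with equality.

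Next I project the difference of (\ref{DAV3}) and (\ref{DAV4}) onto $\varepsilon$:
\[
\langle y(t)-x(t),\varepsilon\rangle = |y_0-x_0|e^{-\alpha t}+\int_0^t e^{-\alpha(t-s)}c(s)\bigl(\sigma e^{k\int_0^s c(r)dr}-\langle u(s),\varepsilon\rangle\bigr)ds .
\]
For any $u(\cdot)\in\bar U$, Lemma \ref{TB} (with $\varphi=\rho$, $w=u$) gives $|u(s)|\le \rho e^{k\int_0^s c(r)dr}$, and hence $\langle u(s),\varepsilon\rangle\le \rho e^{k\int_0^s c(r)dr}$. Substituting this bound yields
\[
\langle y(t)-x(t),\varepsilon\rangle\ \ge\ |y_0-x_0|e^{-\alpha t}+(\sigma-\rho)\int_0^t e^{-\alpha(t-s)}c(s)e^{k\int_0^s c(r)dr}ds .
\]
The integrand is nonnegative because $c\ge 0$, and $\sigma-\rho>0$ by hypothesis. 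The first term is strictly positive for every $t$ whatever the sign of $\alpha$, so the right-hand side is strictly positive. Therefore $|y(t)-x(t)|\ge\langle y(t)-x(t),\varepsilon\rangle>0$, that is, $x(t)\neq y(t)$ for all $t>0$.

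The main obstacle is applying Lemma \ref{TB} legitimately. The pursuer's constraint (\ref{DAV1}) is written without the weight $c(s)$, whereas Lemma \ref{TB} and the proposition's computations use the weighted form $\rho^2+2k\int_0^t c(s)|u(s)|^2ds$. I will adopt the weighted form throughout, consistent with Lemma \ref{TK}. If instead the unweighted form (\ref{DAV1}) is used, the argument is the same with the exponentials $e^{k\int_0^t c}$ replaced by $e^{kt}$: the evader takes $V(t)=\sigma e^{kt}\varepsilon$ and the pursuer satisfies $|u(t)|\le\rho e^{kt}$. The crucial point in either version is that both players share the same growth factor, so the comparison reduces to $\sigma>\rho$.

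Strict positivity does not even require $\sigma>\rho$, since the first term already keeps the separation positive; $\sigma\ge\rho$ would suffice. So the hypothesis $\rho<\sigma$ is more than enough and no case distinction on $\alpha$ is needed.
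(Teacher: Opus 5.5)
Your proof is correct and is essentially the paper's: the evader moves along a fixed direction with the extremal Gronwall growth $\sigma e^{k\int_0^t c(s)ds}$, Lemma \ref{TK} gives admissibility, Lemma \ref{TB} gives $|u(s)|\le\rho e^{k\int_0^s c(r)dr}$, and these yield the lower bound $|y_0-x_0|e^{-\alpha t}+(\sigma-\rho)\int_0^t e^{-\alpha(t-s)}c(s)e^{k\int_0^s c(r)dr}ds$; your projection onto $\varepsilon$ simply replaces the paper's reverse triangle inequality. Your sign is in fact the right one, because the paper takes $V(t)=-\sigma e^{k\int_0^t c(s)ds}\varepsilon$, which sends the evader toward the pursuer: with $u\equiv 0$, $c\equiv 1$, $\alpha>0$ one gets $y(t)-x(t)=e^{-\alpha t}\bigl(|y_0-x_0|-\sigma\frac{e^{(\alpha+k)t}-1}{\alpha+k}\bigr)\varepsilon$, which vanishes in finite time, so the paper's step $\bigl|x_0e^{-\alpha t}-y_0e^{-\alpha t}-\int_0^t e^{-\alpha(t-s)}c(s)V(s)ds\bigr|\ge |x_0e^{-\alpha t}-y_0e^{-\alpha t}|+\sigma\int_0^t e^{-\alpha(t-s)}c(s)e^{k\int_0^s c(r)dr}ds$ fails as written but holds with equality for your choice $V=+\sigma e^{k\int_0^t c(s)ds}\varepsilon$.
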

\begin{proof}
 	Allow the evader applies  strategy
 	 \begin{equation}\label{DAV20}
 	 V(t) = -\sigma e^{k\int_{0}^{t} c(s)ds} \varepsilon, 
 	\end{equation}
 	where $\varepsilon = \frac{y_{0} - x_{0}}{|y_{0} - x_{0}|}$.\\
 The permissible of strategy (\ref{DAV20}) is guaranteed, in view of the fact that, from lemma (\ref{TK}), it implies that
 	\begin{align*}
 	 \vert V(t)\vert^{2} &=\sigma^{2} e^{2k\int_{0}^{t} c(s)ds} \vert \varepsilon \vert^{2}
 	\\& = \sigma^2\left(1 + 2k\int_{0}^{t} c(s) e^{2k\int_{0}^{s} c(r)dr}ds \right)
 	\\& = \sigma^2 + 2k\int_{0}^{t} c(s) \sigma^2 e^{2k\int_{0}^{s} c(r)dr}ds 
 		\\& = \sigma^2 + 2k\int_{0}^{t} c(s) | V(s)|^2ds.
 	\end{align*} 
 Suppose the evader uses strategy (\ref{DAV20}), we prove that $\vert x(t)-y(t)\vert > 0 $ for every $t$.
 Certainly,

 	\begin{align*}                          
	| x(t)-y(t)| &= \left\vert  x_{0}e^{-\alpha t} - y_{0}e^{-\alpha t} -\int_{0}^{t} e^{-\alpha(t - s)} c(s)V(s)ds + \int_{0}^{t} e^{-\alpha(t - s)} c(s)u(s)ds\right\vert 
 	\\& \geq  \left\vert  x_{0}e^{-\alpha t} - y_{0}e^{-\alpha t} -\int_{0}^{t} e^{-\alpha(t - s)} c(s)V(s)ds \right\vert - \left\vert \int_{0}^{t} e^{-\alpha(t - s)} c(s)u(s)ds\right\vert
 	\\& \geq \left\vert  x_{0}e^{-\alpha t} - y_{0}e^{-\alpha t} -\int_{0}^{t} e^{-\alpha(t - s)} c(s)V(s)ds \right\vert - \int_{0}^{t} e^{-\alpha(t - s)} c(s)|u(s)|ds 
 	\\& \geq |x_{0}e^{-\alpha t} - y_{0}e^{-\alpha t}| + \sigma\int_{0}^{t} e^{-\alpha(t - s)} c(s) e^{k\int_{0}^{s} c(r)dr}ds - \rho\int_{0}^{t} e^{-\alpha(t - s)} c(s)e^{k\int_{0}^{s} c(r)dr}ds
 	\\& = |x_{0}e^{-\alpha t} - y_{0}e^{-\alpha t}| +(\sigma - \rho) \int_{0}^{t} e^{-\alpha(t - s)} c(s) e^{k\int_{0}^{s} c(r)dr}ds > 0 .
 	\end{align*} 
Therefore, $x(t) \neq y(t)$ for all $t > 0$, which concludes the proof of Theorem \ref{TTTk}.
 \end{proof}

\section{conclusion}
\noindent In this work, we analyzed a differential game of fixed duration in the Hilbert space $\mathbb{R}^n$. The dynamics of the players are governed by a generalized first-order linear differential equation, and their control functions are constrained by Gronwall-type conditions.\\[3mm]
\noindent In Theorem \ref{TTT}, we established that if $y(\vartheta) \in X$ and $\rho > \sigma$, then the pursuit is completed. In other words, when the evader’s state at time $\vartheta$ lies in the half-space $X$ and the pursuer’s maximum speed exceeds that of the evader, it follows that $x(\vartheta) = y(\vartheta)$.\\[3mm]
\noindent Furthermore, in Theorem \ref{TTTk}, we proved that evasion is possible in the game (\ref{AT3})-(\ref{DAV2}) whenever $\rho \leq \sigma$, meaning that the pursuer’s maximum speed is less than or equal to that of the evader.\\
This work extends and generalizes earlier result by Rilwan et al. \cite{TY21}. The theoretical framework developed here has potential applications in engineering systems, economic competition models, missile guidance, autonomous vehicles, robotics, and related multi-agent control problems.

\section{Declarations}
\textbf{Competing Interest}: The authors declare no conflicts of interest.\\
\textbf{Author Contributions}: David Terna Gbande and Jamilu Adamu drafted the paper; Abbas Ja'afaru Badakaya and Mehdi Salimi supervised and reviewed it.\\
\textbf{Funding}: This research received no funding.\\
\textbf{Availability of data and material}: No data is associated with this work.\\

\end{document}